\newcommand{\NN}{\mathbb{N}}
\newcommand{\vertiii}[1]{{\left\vert\kern-0.25ex\left\vert\kern-0.25ex\left\vert #1 
    \right\vert\kern-0.25ex\right\vert\kern-0.25ex\right\vert}}
\newcommand{\re}{\mathbb{R}}
\newcommand{\bz}{\mathbb{Z}}
\newcommand{\p}{\mathcal{P}}
\newcommand{\om}{\Omega}
\newcommand{\eps}{\varepsilon}
\newcommand{\K}{\mathcal{K}}
\newtheorem{theo}{Theorem}[section]
\newtheorem{lem}[theo]{Lemma}
\newtheorem{prop}[theo]{Proposition}
\theoremstyle{definition}
\newtheorem{rmk}[theo]{Remark}
\newtheorem{ex}[theo]{Example}
\begin{document}

\title{K-theory of crossed products of tiling C*-algebras by rotation groups}
\author{Charles Starling\thanks{Departamento de Matem\'atica, Universidade Federal de Santa Catarina, 88040-900 Florian\'opolis SC, Brazil. Research supported in part by NSERC (Canada) and CNPq(Brazil). Email: \texttt{slearch@gmail.com}}}
\date{}
\maketitle
\begin{abstract}
Let $\om$ be a tiling space and let $G$ be the maximal group of rotations which fixes $\om$. Then the cohomology of $\om$ and $\om/G$ are both invariants which give useful geometric information about the tilings in $\om$. The noncommutative analog of the cohomology of $\om$ is the K-theory of a C*-algebra associated to $\om$, and for translationally finite tilings of dimension 2 or less the K-theory is isomorphic to the direct sum of cohomology groups. In this paper we give a prescription for calculating the noncommutative analog of the cohomology of $\om/G$, that is, the K-theory of the crossed product of the tiling C*-algebra by $G$. We also provide a table with some calculated K-groups for many common examples, including the Penrose and pinwheel tilings.
\end{abstract}

\section{Introduction}

From a dynamical point of view, one studies an aperiodic tiling $T$ of $\re^n$ by considering a set of tilings $\om_T$ which contains $T$ and which is invariant under translations. This set is given a metric, and under some natural assumptions on $T$ the space $\om_T$ is compact and the dynamical system $(\om_T, \re^d)$ (where $\re^d$ acts on $\om$ by translation) is minimal (that is, every orbit is dense).

Invariants of both the space $\om_T$ and the dynamical system $(\om_T, \re^n)$ are prominent in the theory of tilings. The integer cohomology of the metric space $\om$ is a topological invariant which gives geometric information about $T$, and in most cases of interest is computable, see \cite{AP98}, \cite{FHK}, \cite{BDHS}. Putnam and Kellendonk \cite{KP06} have also considered the dynamical cohomology of $(\om_T, \re^n)$ and studied how this invariant is related to the cohomology of $\om_T$. 

Since $(\om_T, \re^n)$ is minimal, the quotient topology on its space of orbits is trivial. It is the view of Connes noncommutative geometry \cite{CoNCG} that in the cases of such quotients, a more suitable object to study in its place is a C*-algebra which encodes the original space and the quotient. In our case, the C*-algebra to consider is the {\em crossed product} of the C*-algebra $C(\om_T)$ by the group $\re^n$, denoted $C(\om_T)\rtimes \re^n$. An important invariant of C*-algebras is {\em K-theory}, and in many cases, the K-theory of $C(\om_T)\rtimes \re^n$ is isomorphic to the cohomology of $\om_T$, see \cite{AP98}, \cite{FHK}.

In the study of tilings, rotations also play an important r\^ ole. One of the motivating applications of aperiodic tilings is modeling quasicrystals, that is, materials which have orderly atomic structures but no translational symmetry. It was the presence of five-fold rotational symmetry in the diffraction pattern of an alloy studied by Shechtman et al \cite{Sh84} (a symmetry impossible for substances with periodic atomic structures) which began the study of these materials, and eventually earned Shechtman a Nobel prize in chemistry in 2011.

Here, we concern ourselves only with tilings of $\re^2$ which exhibit some rotational symmetry. In this setting, rotations are often a key feature and a source of aesthetic appeal. For example, the Penrose tilings famously have ten-fold rotational symmetry, and so the group of rotations generated by rotation by $\pi/5$ fixes $\om_T$, even though the smallest rotation which fixes any single tiling in $\om_T$ is $2\pi/5$. We note that the largest group of rotations which fixes $\om_T$ is not always finite -- in the case of Conway's pinwheel tilings, $\om_T$ is invariant under every rotation of the plane.

Connecting the maximal group of rotations $G$ of a given tiling $T$ with the cohomology has been undertaken in the literature by many authors, see \cite{BDHS}, \cite{ORS}, and \cite{Rand06} for example. In many cases the cohomology of the space $\om_T/G$ is computable, and is isomorphic to the ``rotationally invariant part'' of the cohomology, up to finite extensions (\cite{ORS}, Theorem 2). From the quasicrystal perspective, symmetries in diffraction spectra were considered by Mermin in \cite{Me97} and later by Lenz and Moody in \cite{LM09}.

In this note, we relate rotation groups which fix $\om_T$ to C*-algebras and K-theory. If a rotation group $G$ fixes $\om_T$, then $G$ acts in a natural way on $C(\om_T)\rtimes \re^2$, allowing us to form an iterated crossed product, $(C(\om_T)\rtimes\re^2)\rtimes G$. As one might guess, this is isomorphic to the crossed product $C(\om_T)\rtimes (\re^2 \rtimes G)$, where $\re^2 \rtimes G$ is the usual semidirect product of groups. Our goal here is to give a complete prescription for calculating the K-theory of this C*-algebra. 


What we prove (Theorem \ref{maintheorem}) is that one can calculate the K-theory of this C*-algebra if one knows how to calculate the K-theory of $C(\om/G)$ and also knows the number of the tilings with nontrivial rotational symmetry and their stabilizer subgroups. For substitution tilings, one can determine this information from rotationally invariant patches; see \cite{ORS}, \cite{BDHS}.

This note is organized as follows: in Section 2 we recall tiling terminology and set notation, while recalling the tiling C*-algebra. In Section 3 we present the main theorem, which is a prescription for calculating the K-theory of $C(\om_T)\rtimes (\re^2\rtimes G)$, and give an example which illustrates how we apply the techniques of \cite{EE11}. Section 4 is a table of some computed K-groups for frequently-studied examples, along with a remark about the consequences of some of the results to C*-algebra classificaion.
\section{Tilings and C*-algebras}

In this section we recall the definition of a tiling and its continuous hull, and list some of its properties. We also discuss the C*-algebra typically associated to a tiling.

Let $B_r(x)$ denote the open ball in $\re^2$ around $x\in\re^2$ with radius $r>0$. By a {\em tile} we mean a subset of $\re^2$ which is homeomorphic $\overline{B_1(0)}$, possibly with a label. Normally the label will be suppressed, but in all technicality a tile will be an ordered pair $(t, \text{label})$ where $t$ is a subset of $\re^2$ homeomorphic to the closed unit ball. A {\em partial tiling} is a set of tiles whose interiors have pairwise disjoint intersections. The {\em support} of a partial tiling $P$ is the union of its tiles, and is denoted supp$(P)$. A {\em patch} is a finite partial tiling, and a {\em tiling} is a partial tiling with support equal to $\re^2$. Given a partial tiling $T$ and a subset $U$ of $\re^2$, $T(U)$ will denote the partial tiling consisting of all tiles in $T$ which intersect $U$; if $x\in\re^2$, $T(\{x\})$ will be shortened to $T(x)$.

Let $E^2$ denote the group of orientation-preserving isometries of $\re^2$; this is also called the group of {\em Euclidean motions} of the plane, and can be seen as a semidirect product $E^2 = \re^2\rtimes \mathbb{T}$. Elements of $E^2$ are ordered pairs $(x, g)$ with $x\in\re^2$, $g\in \mathbb{T}$ with product and inverse determined by the formulas
\[
(x,g)^{-1} = (-g^{-1}x, g^{-1}), \hspace{1cm} (x,g)(y,h) = (x+gy, gh).
\]
In the above we consider $g\in \mathbb{T}$ as a rotation matrix, and for $x\in\re^2$, $gx$ denotes the usual matrix product. Since $\re^2$ and $\mathbb{T}$ are metric spaces, we give $\re^2\rtimes \mathbb{T}$ the metric which is the coordinatewise sum of these. For $(x,g)\in E^2$, we let $\vertiii{(x,g)}$ denote the distance from $(x,g)$ to the identity $(0,1)$.

The group $E^2$ acts on $\re^2$ on the left via the formula $(x,g)y = gy + x$. For $(x,g)\in E^2$ and $U\subset \re^2$ we denote $(x,g)U = \{ (x,g)u \mid u\in U\}$; we will make use of the shorthand $(x,1)U = U+x$ and $(0,g)U = gU$. Given a tiling $T$ and $(x,g)\in E^2$ we let $(x,g)T = \{(x,g)t \mid t\in T\}$, and we easily see that this is also a tiling. A tiling for which $T+x = T$ for some non-zero $x\in\re^2$ is called {\em periodic}. A tiling for which no such non-zero vector exists is called {\em aperiodic}.

For $r>0$, let Patch$_r(T)$ denote the set of patches $P\subset T$ such that the support of $P$ has diameter less than $r$. If $P,Q\in$ Patch$_r(T)$ and $P = (x,g)Q$ for some $(x,g)\in E^2$, we write $P\sim_{E^2} Q$. If $P = Q+x$ for some $x\in\re^2$, we write $P\sim_{\re^2}Q$. We note that both of these define equivalence relations on Patch$_r(T)$. A tiling $T$ is said to have {\em finite local complexity}, or {\em FLC}, if for each $r>0$ the set Patch$_r(T)/\sim_{E^2}$ is finite, and it is said to have {\em translational finite local complexity} if for each $r>0$ the set Patch$_r(T)/\sim_{\re^2}$ is finite. We will shorten ``$T$ has translational finite local complexity'' to ``$T$ is {\em translationally finite}''.

If $T$ is a tiling and $\p$ is a finite set of tiles such that each element of $T$ is congruent to some element of $\p$, we say that $\p$ is a set of {\em prototiles} for $T$. In the translationally finite case, we will insist that each element of $T$ be a translate of an element of $\p$.

One can define a distance between tilings under which two tilings will be close if they agree on a large ball around the origin up to a small Euclidean motion. For tilings $T_1,T_2$ let
\begin{eqnarray*}
d(T_1,T_2) & = &  \inf\{1,\varepsilon \mid \exists\ e_1, e_2\in E^2 \ni \vertiii{e_1}, \vertiii{e_1} < \eps,\\
        &   & \hspace{1.5cm}(e_1T_1)(B_{1/\eps}(0)) = (e_1T_2)(B_{1/\eps}(0))\}.
\end{eqnarray*} 
This is called the {\em tiling metric}. Given a tiling $T$, the completion of the set $T+\re^2 = \{T+x\mid x\in\re^2\}$ in this metric is denoted $\Omega_T$ and is called the {\em continuous hull} of $T$. Given a Cauchy sequence in $T+\re^2$ one can construct a tiling that it converges to (not necessarily in $T+\re^2$), and so the elements of $\Omega_T$ are all tilings. Furthermore, the metric above extends to the completion. There are many metrics in the literature which result in the same topology as this for the classes of tilings we are interested in, and one may describe this topology in a more natural way, see \cite{BHZ00}. If $T$ admits a finite prototile set and has finite local complexity, then $\Omega_T$ is compact (\cite{RW92}, Lemma 2). If $T$ is translationally finite, then the small Euclidean motions in the definition of the tiling metric are necessarily translations. When $T$ is translationally finite, $\Omega_T$ is the space of tilings $T'$ such that every patch in $T'$ appears an in $T$, up to a small translation. 

If $T'\in\Omega_T$, then $T'+x\in\Omega_T$ for any $x\in\re^2$. This gives a continuous action of $\re^2$ on $\Omega_T$ and hence a dynamical system $(\Omega_T, \re^2)$. The orbit of $T$ is dense in $\Omega_T$ by definition. We shall say that $T$ is {\em minimal} whenever $(\Omega_T, \re^2)$ is minimal as a dynamical system (that is, every orbit is dense in $\Omega_T$). When $T$ is translationally finite, $T$ is minimal if and only if $T$ is {\em repetitive}, that is, for any patch $P\subset T$, there is an $R>0$ such that for any $x\in \re^2$ there is a translate of $P$ in $T$ whose support is contained in $B_R(x)$. 

A tiling $T$ is called {\em strongly aperiodic} if $\Omega_T$ contains no periodic tilings. It is a fact that if $T$ is repetitive and aperiodic, then it is strongly aperiodic (\cite{KP00}, Proposition 2.4).

Once and for all, we fix a tiling $\mathfrak{T}$ which is minimal, aperiodic, admits a finite prototile set $\p$ and which has finite local complexity. For notational convenience we will drop the subscript and simply denote $\Omega_\mathfrak{T} := \Omega$. Then $\Omega$ is compact, and the dynamical system $(\Omega,\re^2)$ is minimal. 

We will be primarily interested in the largest group of rotations which fixes $\om$. For translationally finite tilings, this group will be finite and cyclic -- for instance the largest group of rotations which fixes $\om$ for the Penrose tilings is the group generated by counterclockwise rotation by $\pi/5$, which is isomorphic to $\bz_{10}$. In contrast, Conway's pinwheel tilings have the property that within any given tiling there are tiles at infinite different orientations, and so the tiling space is fixed by the entire group $SO(\re, 2)\cong \mathbb{T}$. In either case, when $G$ is such a group, the map $G\times \om \to \om$ defined by $(g,T)\mapsto gT$ is jointly continuous and so defines a continuous left action of $G$ by homeomorphisms on $\om$, i.e. $\om$ is a left $G$-space. The space of orbits is denoted $\om/G = \{GT\mid T\in\om\}$ and is a compact Hausdorff space, see \cite{BDHS} and \cite{Wh10}. For $T\in \om$, we denote $G_T := \{g\in G\mid gT = T\}$; this is called the {\em stabilizer} subgroup for $T$. We have the following easy fact about the stabilizer subgroups.
\begin{lem}
For any $T\in\om$, $G_T$ is finite.
\end{lem}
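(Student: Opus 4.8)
The plan is to regard $G_T$ as a subgroup of the circle group $\mathbb{T}$ and to show that the full rotational stabilizer $S = \{g \in \mathbb{T} \mid gT = T\}$ is a \emph{closed, proper} subgroup of $\mathbb{T}$; since the only closed subgroups of $\mathbb{T}$ are the finite cyclic groups and $\mathbb{T}$ itself, this forces $S$, and hence $G_T \subseteq S$, to be finite. (Recall $G_T \subseteq S$ because $G \subseteq \mathbb{T}$.)

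First I would establish that $S$ is closed. Rotating a tiling by a small angle moves it a small distance in the tiling metric, since one may take the small Euclidean motion in the definition of $d$ to be the inverse rotation, so the map $g \mapsto gT$ is continuous from $\mathbb{T}$ into the space of tilings. As $\{T\}$ is closed in a metric space, $S$, being its preimage, is closed, and it is visibly a subgroup. Equivalently, one checks directly that $G$ itself is a closed subgroup of $\mathbb{T}$, using compactness of $\om$, and that $G_T$ is closed in $G$.

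Next I would rule out $S = \mathbb{T}$, that is, show that no tiling in $\om$ is invariant under every rotation about the origin. Set $D = \max_{p \in \p} \operatorname{diam}(p)$ and $a = \min_{p \in \p} \operatorname{area}(p)$; here $\p$ is finite, and $a > 0$ because each prototile is homeomorphic to a closed disk and hence has nonempty interior, which carries positive Lebesgue measure by invariance of domain. Suppose toward a contradiction that $gT = T$ for all $g \in \mathbb{T}$. Pick a point $x$ in the interior of a tile $t \in T$ with $|x| = \rho > D$; then $0 \notin t$, so $t$ cannot be invariant under all rotations about the origin and its stabilizer in $\mathbb{T}$ is a proper, hence finite, closed subgroup. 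Therefore the orbit $\{gt \mid g \in \mathbb{T}\}$ contains infinitely many distinct tiles of $T$, each a congruent copy of $t$ meeting the circle of radius $\rho$ and thus contained in $\overline{B_{\rho+D}(0)}$. Each such tile has area at least $a$, and the tiles have pairwise disjoint interiors, so their total area is infinite, contradicting the finiteness of $\operatorname{area}(B_{\rho+D}(0))$. Hence $S \neq \mathbb{T}$, $S$ is finite, and so is $G_T$.

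I expect the main obstacle to be this last step, converting the algebraic dichotomy for closed subgroups of $\mathbb{T}$ into the geometric impossibility of a rotation-invariant tiling. The essential inputs are the uniform lower bound $a$ on tile area and the uniform upper bound $D$ on tile diameter supplied by the finite prototile set $\p$; the packing contradiction collapses without them, which is precisely why the standing hypothesis that $\mathfrak{T}$ admits a finite prototile set is needed here. A secondary point to verify carefully is the continuity of $g \mapsto gT$ into the tiling metric, but this is routine.
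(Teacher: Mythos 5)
Your proof is correct, and it shares its skeleton with the paper's: both arguments show the rotational stabilizer is a closed subgroup of $\mathbb{T}$ and then invoke the fact that the only infinite closed subgroup of $\mathbb{T}$ is $\mathbb{T}$ itself. Where you diverge is in the mechanism for ruling out the full circle. The paper's proof is purely local: it picks a tile $t$ not containing the origin and a single rotation $\theta$ for which $\theta t$ and $t$ overlap without coinciding; if $\theta$ stabilized $T$, then $t$ and $\theta t$ would be distinct tiles of the same tiling with overlapping interiors, which is impossible. Your proof is global: you show that full rotational invariance would force the orbit of one tile to pack infinitely many congruent copies, each of fixed positive area and with pairwise disjoint interiors, into the bounded annular region $\overline{B_{\rho+D}(0)}$. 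Both are valid; the paper's version is shorter and avoids any measure-theoretic input, while yours makes explicit the quantitative obstruction (bounded diameter plus positive area) to a rotation-invariant tiling. One small correction to your closing remark: the uniform bounds $D$ and $a$ over the whole prototile set $\p$ are not actually what the argument needs, since every tile in the orbit $\{gt \mid g\in\mathbb{T}\}$ is congruent to the single tile $t$, so $\operatorname{diam}(t)$ and $\operatorname{area}(t)$ already suffice; finiteness of $\p$ is therefore not essential to your packing step, only the (automatic) facts that a tile has finite diameter and nonempty interior.
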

\begin{proof}
Since the map $G\times \om \to \om$ defined by $(g,T)\mapsto gT$ is jointly continuous, $G_T$ is closed. Now, find $t\in T$ such that the origin is not in $t$. We can find an element $\theta\in \mathbb{T}$ such that $\theta t \cap t$ and $\theta t\setminus t$ both have nonempty interior. If $\theta \in G_T$, then $\theta t$ and $t$ are tiles in the same tiling $\theta T = T$, which is a contradiction. Thus $\theta\notin G_T$. Since the only infinite closed subgroup of $\mathbb{T}$ is $\mathbb{T}$, we are forced to conclude that $G_T$ is finite.
\end{proof}


We recall that if $A$ is a C*-algebra, $H$ is a locally compact group, and $\alpha: H \to$ Aut$(A)$ is a continuous homomorphism, then the {\em crossed product} is a C*-algebra denoted $A\rtimes_\alpha H$ which is meant to encode $A$, the group $G$, and the action $\alpha$. For instance, when $H$ is discrete and $A$ is unital, $A\rtimes_\alpha H$ contains a subalgebra isomorphic to $A$ as well as a unitary representation $\{u_g\mid g\in H\}\subset A\rtimes_\alpha G$ which implements the action $\alpha$ via the rule $\alpha(a) = u_gau_g^{*}$ for all $a\in A, g\in H$. When the action is understood, the subscript $\alpha$ is typically dropped. As mentioned in the introduction, if one thinks of a C*-algebra $A$ as a noncommutative topological space, then taking the crossed product by a group $H$ which is acting on $A$ can be thought of as the noncommutative analog of taking the space of $H$-orbits. This can be made concrete: if $H$ is a group which acts freely and properly on a space $X$, then the crossed product $C(X)\rtimes H$ is {\em strongly Morita equivalent} (a natural equivalence on C*-algebras) to $C(X/H)$, see \cite{Gre77}. For a reference on general C*-algebra crossed products, we refer the interested reader to \cite{Wil07}.

The compact space $\om$ leads to a C*-algebra $C(\om)$, the continuous complex-valued functions on $\om$ with pointwise product, sum, and supremum norm. Dynamical systems on $\om$ give rise to crossed products of $C(\om)$. Normally, the C*-algebra studied in the context of tilings is the crossed product $C(\om)\rtimes \re^2$. As mentioned in the introduction, our goal is to compute the K-theory of $C(\om)\rtimes (\re^2\rtimes G)$, which is isomorphic to a crossed product of the original tiling C*-algebra
\[
C(\om)\rtimes (\re^2\rtimes G) \cong (C(\om)\rtimes \re^2)\rtimes G,
\]
see \cite{Wil07}, Proposition 3.11 for the details.

\section{K-theory Computations}

K-theory is an invariant for C*-algebras -- given a C*-algebra $A$ one can produce an ordered pair of abelian groups $(K_0(A),K_1(A))$, this ordered pair is what we will refer to as the K-theory of $A$. K-theory is an invariant for strong Morita equivalence.

In previous study of $C(\om)\rtimes \re^2$, its K-theory has been computed using the following powerful theorem.

\begin{theo}{\em\bf (Connes-Thom)} Let $A$ be a C*-algebra and let $\alpha:\re\mapsto \text{Aut}(A)$ be a continuous homomorphism. Then
\[
K_i(A\rtimes_\alpha \re) \cong K_{i-1}(A)
\]
where the index is read modulo 2.
\end{theo}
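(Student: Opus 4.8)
The plan is to follow Connes' original strategy: construct a natural ``Thom map'' $T^\alpha : K_i(A) \to K_{i+1}(A\rtimes_\alpha \re)$ for every $\re$-C*-algebra $(A,\alpha)$, and then use Takai duality to force it to be an isomorphism. I would work in Kasparov's bivariant $KK$-theory, where the map can be packaged as a Kasparov product. Associated to the action $\alpha$ there is a canonical class $t_\alpha \in KK^1(A, A\rtimes_\alpha\re)$, built from the inclusion of $A$ into the multiplier algebra of the crossed product together with the unbounded generator of the translation action on $L^2(\re)$ (morally the operator $\tfrac{1}{i}\tfrac{d}{dx}$); the Connes--Thom map $T^\alpha$ is then Kasparov product with $t_\alpha$. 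Two formal properties must be recorded: (i) \emph{naturality}, so that for an $\alpha$-$\beta$-equivariant $*$-homomorphism $\varphi:A\to B$ the evident square commutes; and (ii) \emph{homotopy invariance}, namely that a continuous path of actions $(\alpha_t)_{t\in[0,1]}$ induces, under the canonical identification of the crossed products at the endpoints, the same map on K-theory.

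Second, I would pin the map down in the \textbf{base case} of the trivial action. When $\alpha$ is trivial one has $A\rtimes_{\mathrm{id}}\re \cong A\otimes C^*(\re)$, and the Fourier transform identifies $C^*(\re)$ with $C_0(\re)$, so the crossed product is the suspension $SA = A\otimes C_0(\re)$. Here the construction of $t_\alpha$ reduces to the Bott/suspension class, so that $T^{\mathrm{id}}$ is exactly the suspension isomorphism $K_i(A)\cong K_{i+1}(SA)$ built into the definition of higher K-groups. This is the normalization that, together with (i) and (ii), determines the family $\{T^\alpha\}$.

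Third --- and this is the crux --- I would establish \textbf{invertibility} via Takai duality. Applying the same construction to the dual action $\hat\alpha$ of $\hat\re\cong\re$ yields a second Thom class $t_{\hat\alpha}\in KK^1\big(A\rtimes_\alpha\re,\ (A\rtimes_\alpha\re)\rtimes_{\hat\alpha}\hat\re\big)$. Takai duality supplies a natural isomorphism $(A\rtimes_\alpha\re)\rtimes_{\hat\alpha}\hat\re \cong A\otimes\K(L^2(\re))$, and K-theory is insensitive to stabilization by $\K$. The key computation is that the Kasparov product $t_\alpha \otimes t_{\hat\alpha}$, transported along this isomorphism, equals $\pm$ the Bott element; hence the composite $T^{\hat\alpha}\circ T^\alpha : K_i(A)\to K_{i+2}(A\otimes\K)\cong K_i(A)$ is $\pm$ the identity after invoking Bott periodicity and stability. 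Running the symmetric argument for the other composite then shows $T^\alpha$ is an isomorphism, which is the asserted $K_i(A\rtimes_\alpha\re)\cong K_{i-1}(A)$.

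The main obstacle is precisely this last product computation: verifying that the two Thom classes compose to the Bott element requires careful handling of the unbounded operators defining $t_\alpha$ and $t_{\hat\alpha}$ and of the explicit form of the Takai isomorphism. An alternative that sidesteps the direct calculation is to argue axiomatically --- uniqueness of a natural, homotopy-invariant family normalized on trivial actions identifies $T^\alpha$ intrinsically, and invertibility then follows formally from the duality square --- but either way the genuine content of the theorem lies in reconciling the Thom construction with Bott periodicity.
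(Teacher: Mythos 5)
The paper does not prove this theorem at all: it is stated as a classical result of Connes (the Connes--Thom isomorphism) and used as a black box to reduce the K-theory of $C(\Omega)\rtimes\re^2$ to that of $C(\Omega)$, so there is no in-paper argument to compare yours against. Judged on its own terms, your outline is a faithful sketch of the standard modern proof (essentially the Fack--Skandalis/Kasparov route): build the Thom class $t_\alpha\in KK^1(A, A\rtimes_\alpha\re)$ from the covariant inclusion together with the generator of translation on $L^2(\re)$, record naturality and homotopy invariance, normalize on the trivial action via $A\rtimes_{\mathrm{id}}\re\cong A\otimes C_0(\re)$, and invert using Takai duality, stability, and Bott periodicity. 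You also correctly flag where the real work lives, namely the verification that $t_\alpha\otimes t_{\hat\alpha}$ is (up to sign) the Bott element; as written this step is asserted rather than carried out, so your text is a proof strategy rather than a proof. Two smaller points deserve care if you were to flesh it out: the ``homotopy invariance'' you state must be phrased via an action of $\re$ on $C([0,1],A)$ and the evaluation maps (the crossed products at the two endpoints are not literally identified in general), and the axiomatic alternative you mention (naturality plus suspension-compatibility plus normalization determine the family, whence invertibility is formal) is precisely Connes' original argument and is arguably the cleaner way to avoid the unbounded-operator bookkeeping. None of this affects the paper, which only needs the statement.
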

Hence, computing the K-theory of $C(\om)\rtimes \re^2$ comes down to computing the K-theory of $C(\om)$. In the translationally finite case, this has been computed for both substitution tilings \cite{AP98} and projection method tilings \cite{Pu10} as the following:
\begin{eqnarray*}
K_0(C(\Omega)\rtimes \re^2) &\cong& H^0(\Omega, \bz) \oplus H^2(\Omega, \bz)\\
K_1(C(\Omega)\rtimes \re^2) &\cong& H^1(\Omega, \bz).
\end{eqnarray*}
where the $H^i(\om,\bz)$ are the \v Cech cohomology groups of $\om$ with integer coefficients. Computation of these cohomology groups has been a very active area of tilings research -- see \cite{AP98}, \cite{BDHS} and \cite{FHK}. It is thought that these groups give a measure of aperiodicity.

We use an analog of the Connes-Thom isomorphism to compute the K-theory of $C(\Omega)\rtimes (\re^2 \rtimes G)$ for rotation groups $G$. 

\begin{prop}
Let $A$ be a C*-algebra, let $G$ be a subgroup of rotations in $SO(\re,2)$ and let $\alpha: \re^2\rtimes G\to$ Aut$(A)$ be a continuous homomorphism. Then 
\[
K_i(A\rtimes_\alpha (\re^2\rtimes G)) \cong K_i(A\rtimes G)
\]
for $i = 0, 1$.
\end{prop}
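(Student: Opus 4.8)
The plan is to break the computation into two moves: first peel off the semidirect-product structure to reduce to an iterated crossed product, and then remove the two translational degrees of freedom by a $G$-equivariant form of the Connes--Thom isomorphism, so that only the crossed product by $G$ survives.

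First I would use that $\re^2$ is the normal subgroup of $\re^2\rtimes G$ together with the standard identification of a crossed product by a semidirect product as an iterated crossed product (\cite{Wil07}, Prop. 3.11, recalled above) to obtain
\[
A\rtimes_\alpha(\re^2\rtimes G)\;\cong\;(A\rtimes_\alpha\re^2)\rtimes_{\tilde\alpha} G,
\]
where $\tilde\alpha$ is the $G$-action on $A\rtimes\re^2$ built from $\alpha|_G$ together with the rotation action of $G$ on $\re^2$. The semidirect-product relation $\alpha_{(0,g)}\alpha_{(x,1)}\alpha_{(0,g)}^{-1}=\alpha_{(gx,1)}$ says precisely that the translational $\re^2$-action on $A$ is $G$-twisted by rotations, so $A\rtimes\re^2$ is naturally a $G$-algebra and the right-hand side makes sense.

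The heart of the argument is to promote the Connes--Thom isomorphism $K_*(A\rtimes\re^2)\cong K_*(A)$ to an equivalence compatible with these $G$-actions. Connes' Thom isomorphism is implemented by an invertible class in $KK$, and I would work with its equivariant refinement, claiming that $A\rtimes_\alpha\re^2$ and $A$ are equivalent in $G$-equivariant $KK$-theory $KK^G$. The delicate point is the equivariance of the Thom class: one cannot split $\re^2=\re\oplus\re$ and apply the one-dimensional Connes--Thom isomorphism twice $G$-equivariantly, since a nontrivial rotation mixes the two coordinates. Instead $\re^2$ must be treated as the complex line $\CC$ on which $G\subset SO(\re,2)\cong\mathbb{T}$ acts by scalar multiplication; the complex structure furnishes a canonical $G$-equivariant orientation, so the resulting complex Thom class lies in $KK^G$ and is invertible there. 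This is exactly the setting of the equivariant Connes--Thom/Thom isomorphism for a complex $G$-representation, and it produces a $KK^G$-equivalence $A\rtimes\re^2\sim_{KK^G}A$; note the total degree shift is $2\dim_{\CC}\CC=2\equiv 0\pmod 2$, which is why no index shift appears in the statement.

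Finally, since $G$ is compact (finite cyclic, or all of $\mathbb{T}$ in the pinwheel case), the descent functor $B\mapsto B\rtimes G$ carries $KK^G$-equivalences to $KK$-equivalences. Applying it to $A\rtimes\re^2\sim_{KK^G}A$ gives
\[
(A\rtimes\re^2)\rtimes G\;\sim_{KK}\;A\rtimes G,
\]
and a $KK$-equivalence induces isomorphisms on K-theory in both degrees, yielding $K_i(A\rtimes(\re^2\rtimes G))\cong K_i(A\rtimes G)$ for $i=0,1$. I expect the main obstacle to be the middle step: producing the Connes--Thom class as an \emph{invertible $G$-equivariant} $KK$-element rather than merely an isomorphism of K-groups, and in particular verifying that the rotation action forces one to use the complex (rather than the iterated real) Thom isomorphism so that equivariance is preserved.
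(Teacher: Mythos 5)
Your argument is correct, but it is worth noting that the paper does not actually supply a proof here: it simply points to \cite{Kas88}, 5.10, \cite{CEN03}, \S 7, and \cite{Ech08}, Theorem 3.1. What you have written is, in effect, a reconstruction of the standard argument underlying those citations: decompose $A\rtimes_\alpha(\re^2\rtimes G)$ as the iterated crossed product $(A\rtimes\re^2)\rtimes G$, establish a $KK^G$-equivalence $A\rtimes\re^2\sim A$, and apply descent for the compact group $G$. Your identification of the delicate point is also the right one --- the one-dimensional Connes--Thom isomorphism cannot be iterated equivariantly because no line in $\re^2$ is invariant under a nontrivial rotation (so $A\rtimes\re$ carries no $G$-action), and one must instead pass through the $G$-equivariant Fack--Skandalis equivalence $A\rtimes\re^2\sim_{KK^G}A\otimes C_0(\re^2)$ followed by equivariant Bott periodicity for $\re^2$ viewed as the complex $G$-representation $\CC$ with $G\subset U(1)$ acting by scalars; the complex structure is exactly what makes the Bott class $G$-equivariant and accounts for the absence of a degree shift. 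The only thing your sketch leaves implicit is that these two steps (equivariant Connes--Thom to the trivial action, then equivariant Bott) are usually separated in the literature, and that descent sends invertible elements of $KK^G$ to invertible elements of $KK$ for the full (equivalently, for compact $G$, the reduced) crossed product. Modulo citing those two ingredients precisely, your proof is complete and is more informative than the pointer the paper gives.
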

\begin{proof}
See \cite{Kas88}, 5.10, \cite{CEN03}, \S7 or \cite{Ech08}, Theorem 3.1.
\end{proof}
One can see that one recovers the Connes-Thom isomorphism when $G$ is the trivial group. This says that we can calculate the $G$-equivariant K-theory of our C*-algebra by calculating the K-theory of $C(\om)\rtimes G$. A theorem of Green \cite{Gre77} states that if a group $H$ acts freely and properly on a space $X$, then the C*-algebras $C(X)\rtimes H$ and $C(X/H)$ are strongly Morita equivalent, and so have the same K-theory. For us, $G$ is compact and therefore its action on $\om$ is proper but in general it will not be free. 

We now arrive to the main theorem of the paper. It is essentially a recipe for computing the K-theory of the crossed products $(C(\om)\rtimes \re^2)\rtimes G$ under conditions which seem to be common for tilings of interest, and uses results of \cite{EE11}. In the proof, we restate the relevant results from \cite{EE11}, and then afterwards we present an example to show what is going on.

\begin{theo}\label{maintheorem} Let $\om$ be a tiling space with rotation group $G$. Suppose that
\begin{enumerate}
\item $F(\om) := \{ GT\in \om/G \mid gT = T \text{ for some } g\in G\setminus \{1\}\}$ is discrete in $\om/G$, and
\item $K_1(C(\om/G))$ is torsion-free.
\end{enumerate}
Let
\[
N_0 := \sum_{GT\in F(\om)} (|G_T|-1).
\]
Then
\begin{eqnarray*}
K_0((C(\om)\rtimes \re^2)\rtimes G) &\cong& K_0(C(\om/G)) \oplus \bz^{N_0}\\
K_1((C(\om)\rtimes \re^2)\rtimes G) &\cong& K_1(C(\om/G)).
\end{eqnarray*}
\end{theo}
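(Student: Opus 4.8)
The plan is to combine the preceding Proposition with an equivariant decomposition of $\om$ along its singular set, reducing everything to a comparison of two six-term exact sequences built over one common ideal. First I would apply the Proposition (the equivariant Connes--Thom isomorphism): since $(C(\om)\rtimes\re^2)\rtimes G\cong C(\om)\rtimes(\re^2\rtimes G)$, we get $K_i((C(\om)\rtimes\re^2)\rtimes G)\cong K_i(C(\om)\rtimes G)$, so it suffices to compute the K-theory of the crossed product of $C(\om)$ by the finite cyclic group $G$. Let $Y\subset\om$ be the set of tilings with nontrivial stabilizer. Because $F(\om)$ is discrete in the compact space $\om/G$ it is finite, say $F(\om)=\{GT_1,\dots,GT_k\}$, and since each orbit is finite, $Y$ is a finite $G$-invariant set, a disjoint union of orbits $GT_j\cong G/G_{T_j}$. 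Writing $H_j:=G_{T_j}$, restriction and extension of functions give the $G$-equivariant extension $0\to C_0(\om\setminus Y)\to C(\om)\to C(Y)\to 0$, and taking crossed products (exact since $G$ is finite) yields $0\to C_0(\om\setminus Y)\rtimes G\to C(\om)\rtimes G\to C(Y)\rtimes G\to 0$ and its six-term sequence in K-theory.

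Next I would identify the two outer terms. On the free part $\om\setminus Y$ the action of $G$ is free and proper, so by Green's theorem $C_0(\om\setminus Y)\rtimes G$ is Morita equivalent to $C_0((\om\setminus Y)/G)=C_0(\om/G\setminus F(\om))$; in particular their K-theories agree. For the singular part, $C(Y)\rtimes G\cong\bigoplus_{j=1}^{k}\big(C(G/H_j)\rtimes G\big)$, and each summand is Morita equivalent to the group C*-algebra $C^*(H_j)$; as $H_j$ is finite cyclic, $C^*(H_j)\cong\CC^{|H_j|}$, so the singular part contributes $\bigoplus_j R(H_j)\cong\bigoplus_j\bz^{|H_j|}$ in degree $0$ and $0$ in degree $1$. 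To bring $C(\om/G)$ into the picture over the same ideal, I would run in parallel the analogous extension $0\to C_0(\om/G\setminus F(\om))\to C(\om/G)\to C(F(\om))\to 0$, whose singular term $C(F(\om))\cong\CC^{k}$ has K-theory $\bz^{k}$ in degree $0$.

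The heart of the argument is the computation of the connecting maps, and this is exactly where I would invoke the techniques of \cite{EE11}. Both six-term sequences have the same ideal $C_0(\om/G\setminus F(\om))$, so the two boundary maps land in the same group $L:=K_1(C_0(\om/G\setminus F(\om)))$, and by excision they are governed entirely by the local structure near the singular orbits. Near a fixed tiling $T_j$ a neighborhood looks like $\re^2$ (local translations) times a transversal, with $H_j\cong\bz/|H_j|$ acting through its rotation representation $V_j$ on the $\re^2$-factor. The equivariant Bott--Thom isomorphism then identifies the local crossed-product boundary $R(H_j)\to\bz$ with the Euler class $1-[V_j]$, that is, with the augmentation $\epsilon_j\colon R(H_j)\to\bz$, while the corresponding local boundary for the quotient is an isomorphism $\bz\to\bz$. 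Since $\epsilon_j$ sends every basis character of $R(H_j)$ to $1$, every basis element of the $j$-th block maps to the same class in $L$ that the quotient boundary already hits; hence $\mathrm{im}(\partial_A)=\mathrm{im}(\partial_B)$ and $\ker(\partial_A)\cong\ker(\partial_B)\oplus\bigoplus_j\ker\epsilon_j\cong\ker(\partial_B)\oplus\bz^{N_0}$, using $N_0=\sum_j(|H_j|-1)$.

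Finally I would assemble the answer. Equality of images gives $K_1(C(\om)\rtimes G)=\mathrm{coker}(\partial_A)=\mathrm{coker}(\partial_B)=K_1(C(\om/G))$, and the extra free kernel produces the summand $\bz^{N_0}$, so that $K_0(C(\om)\rtimes G)\cong K_0(C(\om/G))\oplus\bz^{N_0}$. The role of the torsion-free hypothesis on $K_1(C(\om/G))$ is to resolve the remaining extension problem in the exact sequence of \cite{EE11}, guaranteeing that the correction appears as a genuine direct summand rather than only up to a finite extension (compare the ``up to finite extensions'' in \cite{ORS}). I expect the main obstacle to be precisely this boundary-map and extension analysis: verifying that the local model at each fixed tiling really is the rotation representation (so the augmentation description survives the transversal direction), and then controlling the extension by means of the torsion-free hypothesis. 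The discreteness of $F(\om)$ is what makes the singular contribution a finite direct sum of representation rings, hence what makes $N_0$ finite and the whole scheme effective.
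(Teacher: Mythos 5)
Your overall architecture diverges from the paper's at the key step, and the divergence exposes a genuine gap. The paper does not filter $C(\om)\rtimes G$ by the singular set. Instead it uses the Echterhoff--Emerson realization of $C(\om)\rtimes G$ as the algebra $A$ of $\K(L^2(G))$-valued functions satisfying $f(gT)=\rho_g f(T)\rho_g^*$, takes as ideal $I$ the functions valued in $\K(L^2(G)_{1_{G_T}})$ --- an ideal Morita equivalent to \emph{all} of $C(\om/G)$, not to the punctured quotient --- with quotient $A/I$ a direct sum of $N_0$ copies of the compacts. The resulting boundary map $\bz^{N_0}\to K_1(C(\om/G))$ is then handled not by computation but by citing \cite{EE11}, Theorem 5.20: its image is a torsion subgroup, so hypothesis (2) forces it to vanish, and freeness of $\bz^{N_0}$ splits the remaining short exact sequence. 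Your two-parallel-extensions scheme over the common ideal $C_0(\om/G\setminus F(\om))$ is a legitimate alternative strategy in outline, and your bookkeeping at the end (images equal, kernels differing by $\bigoplus_j\ker\epsilon_j\cong\bz^{N_0}$, extensions splitting because the kernels are free) would deliver the theorem \emph{if} the factorization $\partial_A|_{R(H_j)}=\partial_{B,j}\circ\epsilon_j$ were established. But that factorization is exactly the step you defer ("I expect the main obstacle to be precisely this boundary-map and extension analysis"), and it is the entire mathematical content of the proof: the local model of $\om$ near a rotationally fixed tiling is $\re^2$ times a totally disconnected transversal on which $H_j$ also acts, the singleton $\{\xi_0\}$ is not open in the transversal so one cannot simply excise down to the pure rotation representation, and no argument is given that the augmentation description survives this. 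Note also that $1-[V_j]$ is an element of $R(H_j)$ whose generated ideal is $\ker\epsilon_j$; it is not itself the homomorphism $\epsilon_j$, so even the clean $\re^2$-only computation needs to be phrased more carefully.

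Two further symptoms confirm the gap. First, your argument never actually uses hypothesis (2): if your boundary-map factorization held, both conclusions would follow unconditionally, a strictly stronger statement than the theorem. Since the paper's proof leans essentially on torsion-freeness of $K_1(C(\om/G))$ to kill the index map (and the $K_1$ conclusion genuinely fails if that map has nonzero torsion image), the fact that your route dispenses with the hypothesis is strong evidence that the unproved local step is where the real difficulty lives. Second, you reduce at the outset to ``the finite cyclic group $G$'' and assert that $Y$ is a finite set; the theorem is also applied with $G\cong\mathbb{T}$ (the pinwheel), where the singular orbits are circles $\mathbb{T}/H_j$, so the finiteness claims must be replaced by Green imprimitivity statements (which do still give $C(GT_j)\rtimes\mathbb{T}\sim_{\mathrm{Morita}}C^*(H_j)$, but your proof as written does not cover this case).
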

\begin{proof}

Let $\rho$ denote the right regular representation of $G$, that is, $\rho:G \mapsto B(L^2(G))$ is determined by $\rho_g(\xi)(h) = \xi(hg)$ for all $\xi\in L^2(G)$. Each $\rho_g$ is necessarily unitary and $g$ is a homomorphism. Let
\[
A = \{f: \om\to \K(L^2(G)) \mid f(gT) = \rho_gf(T)\rho_g^*\text{ for all }T\in\om, g\in G\}
\]
where $\K(L^2(G))$ denotes the compact operators on $L^2(G)$. One can show that $C(\om)\rtimes G$ is isomorphic to $A$ (\cite{EE11}, Corollary 2.11). 


Given $T\in\om$, the stabilizer subgroup $G_T$ is a finite subgroup of $G$. For each character $\sigma\in \widehat G_T$ let
\[
L^2(G)_\sigma = \{\xi\in L^2(G)\mid \xi(gh) = \sigma^{-1}(h)\sigma(g)\text{ for all } g\in G, h\in G_T\}.
\]
Then we can decompose $L^2(G)$ into a finite direct sum of subspaces (\cite{EE11}, Example 3.2):
\[
L^2(G) = \bigoplus_{\sigma\in \widehat G_T} L^2(G)_\sigma
\] 
with each summand invariant under conjugation by elements of $\rho(G_T)$. Thus, we must have that for each $f\in A$ and $T\in\om$, 
\[
f(T) \in \bigoplus_{\sigma\in \widehat G_T} \K(L^2(G)_\sigma).
\]
Now, let 
\[
I = \left\{f \in A \mid f(T) \in \K\left(L^2(G)_{1_{G_T}}\right)\right\}
\] 
where $1_{G_T}$ denotes the character which takes value 1 on each element of $G_T$.
Then $I$ is an ideal of $A$ which is strongly Morita equivalent to $C(\om/G)$ (\cite{EE11}, Lemma 3.9). The quotient C*-algebra can be shown to be isomorphic to the direct sum of compact operators
\begin{equation}\label{blockdecomposition}
A/I \cong \bigoplus_{GT\in F(\om)}\bigoplus_{\sigma\in \widehat G_T\setminus \{1_{G_T}\}} \K(L^2(G)_\sigma).
\end{equation}
Since $G_T$ is a finite abelian group, $\widehat G_T$ has $|G_T|$ elements. Thus, $A/I$ is the direct sum of $N_0$ copies of compact operators (on possibly different Hilbert spaces), where $N_0$ is in the statement of the theorem. The short exact sequence of C*-algebras
\begin{equation}\label{shortexact}
0 \to I \to A \to A/I \to 0
\end{equation}
gives rise to the six-term exact sequence in K-theory
\begin{equation}\label{sixtermexact}
\xymatrix{
K_0(I) \ar[r] &K_0(A)\ar[r]& K_0(A/I)\ar[d]\\
K_1(A/I)\ar[u] &K_1(A)\ar[l]& K_1(I) \ar[l]}
\end{equation}
Since K-theory is invariant under isomorphism and Morita equivalence, respects direct sums, and $K_0(\K) = \bz$ and $K_1(\K) = 0$ for any algebra of compact operators $\K$, we obtain the exact sequence
\begin{equation}\label{longexact}
0 \to K_0(C(\om/G))\to K_0(C(\Omega)\rtimes G )\to \bz^{N_0}\stackrel{\partial}{\to} K_1(C(\om/G))\stackrel{\varphi}{\to} K_1(C(\Omega)\rtimes G ) \to 0.
\end{equation}

Whether $G$ is isomorphic to $\mathbb{T}$ or it is finite, the hypotheses of \cite{EE11}, Theorem 5.20 apply, and so the image of the map $\partial$ is a torsion subgroup of $K_1(C(\om/G))$. Since this group is torsion-free by assumption, we have that $\partial$ is the zero map, and so $\varphi$ is an isomorphism. Furthermore, the left part of (\ref{longexact}) is a short exact sequence which splits since $\bz^{N_0}$ is a free abelian group.
\end{proof}

\begin{rmk}\label{postthmrmk}
We make two remarks about the above theorem.
\begin{enumerate}
\item For substitution tilings, the situation simplifies a little bit. In this case, the first assumption of Theorem \ref{maintheorem} is always satisfied -- in this case elements of $F(\om)$ will correspond to rotationally-symmetric patches $P$ such that $P = P(0)$ and for which the sequence $(\omega^n(P)(0))_{n\in\NN}$ is periodic. In addition, for the translationally finite case, the space $\om/G$ is the inverse limit of a reduced Anderson-Putnam complex and so the K-theory of $C(\om/G)$ is computable -- $K_0$ will be the direct sum of the even cohomology groups of $\om/G$ while $K_1$ will be the direct sum of the odd cohomology groups. Even in some non-translationally finite cases (such as the pinwheel tiling), the space $\om/G$ is also the inverse limit of 2-dimensional cell complexes (\cite{BDHS}, \cite{FWW}), and so we can likewise calculate the K-theory of $C(\om/G)$. Calculations of this type are excellently explained in \cite{AP98}, \cite{BDHS}, and \cite{ORS}
\item We know of no examples of tilings for which $K_1(C(\om/G))$ contains a torsion subgroup, but we do not know if it is torsion free in general. In fact, for every example we have computed, this group is either $0$ or $\bz$, see Section 4. 
\end{enumerate}
\end{rmk}

\begin{ex} Penrose tiling, $G=\bz_{10}$.\label{penrosez10ex}

The Penrose tiling is a well-studied tiling with finite rotational symmetry. If we let $r$ denote counterclockwise rotation by $\pi/5$, we have that $\om$ is acted upon by $\langle r \rangle\cong\bz_{10}$. The space $\om/\bz_{10}$ has only two points with trivial stabilizers, each fixed under $r^2$; call them $T_1$ and $T_2$. We note that while $\om$ is fixed by the rotation $r$, no single tiling in $\om$ is.

We have that $\mathcal{K}(L^2(\bz_{10})) \cong \mathbb{M}_{10}$, and relative to a suitable basis $\rho_{r^2}$ is the diagonal $10\times 10$ matrix with entries $(1, 1, z^2, z^2, z^4, z^4,z^6, z^6, z^8, z^8)$, where $z = e^{\frac{i\pi}{5}}$.
We have
\[
C(\Omega)\rtimes \bz_{10} \cong \{ f\in C(\Omega, \mathbb{M}_{10})\mid f(gT) = \rho_gf(T)\rho_g^*\text{ for all }T\in\om, g\in \bz_{10}\}.
\]
In particular, for $i = 1, 2$ we have that $f(T_i)\rho_{r^2} = \rho_{r^2}f(T_i)$, and so each $f(T_i)$ must take the form
\[
f(T_i) = \left[\begin{array}{ccccc}
     B_1&&&&\\
     &B_2&&&\\
     &&B_3&&\\
     &&&B_4&\\
     &&&&B_5
\end{array}\right], \hspace{1cm} B_j\in \mathbb M_2.
\]
Relative to the basis $(v_1, \dots, v_{10})$ which gives $\rho_{r^2}$ the form above, we have
\[
L^2(\bz_{10})_{1_{G_{T_i}}} = \textnormal{span}\{v_1, v_2\}.
\]
Hence our $I$ is all the functions in $C(\Omega)\rtimes \bz_{10}$ such that $f(T_i)$, $i=1,2$, is of the form
\[
f(T_i) = \left[\begin{array}{ccccc}
     B_1&&&&\\
     &0_2&&&\\
     &&0_2&&\\
     &&&0_2&\\
     &&&&0_2
\end{array}\right], \hspace{1cm} B_1\in \mathbb M_2.
\]
where $0_2$ denotes the $2\times 2$ zero matrix. Let $q\in\mathbb{M}_{10}$ be the projection
\[
q = \left[\begin{array}{cc}
     0_2&\\
     &I_8
\end{array}\right]
\]
where $I_8$ is the $8\times 8$ identity matrix. The $*$-homomorphism 
\[
\psi: C(\Omega)\rtimes \bz_{10} \to \left(\bigoplus_{i=1}^{4}\mathbb{M}_2\right)\oplus \left(\bigoplus_{i=1}^{4}\mathbb{M}_2\right) =:Q
\]
\[
f \mapsto (qf(T_1), qf(T_2))
\]
has kernel $I$, and so we obtain the short exact sequence

$$
0 \longrightarrow I \longrightarrow  C(\Omega)\rtimes \bz_{10}  \stackrel{\psi}{\longrightarrow} Q \longrightarrow 0.
$$
This short exact sequence leads to six-term exact sequence in K-theory,
$$
\xymatrix{
K^0(I) \ar[r] &K_0(C(\Omega)\rtimes \bz_{10} )\ar[r]& K_0(Q)\ar[d]\\
K_1(Q)\ar[u] &K_1(C(\Omega)\rtimes \bz_{10} )\ar[l]& K^1(I) \ar[l]}$$
Now, as in the proof of Theorem \ref{maintheorem} $I$ is strongly Morita equivalent to $C(\Omega/\bz_{10})$. As mentioned in Remark \ref{postthmrmk}, $\om/\bz_{10}$ is the inverse limit of two dimensional cell complexes, and so $K_0(C(\om/\bz_{10}))$ is isomorphic to the direct sum of the even cohomology groups of $\om/\bz_{10}$ while $K_1(C(\om/\bz_{10}))$ is the direct sum of its odd cohomology groups. Using standard methods, one finds that $H^0(\om/\bz_{10},\bz)\cong \bz$, $H^1(\om/\bz_{10},\bz)\cong \bz$, and $H^2(\om/\bz_{10},\bz)\cong \bz^2$ (see \cite{AP98} for a detailed prescription for calculating the cohomology of inverse limits of cell complexes, \cite{StThesis} Section 5.2 for the case of orbit spaces, or \cite{ORS} where this cohomology was calculated for the Penrose tiling).

One also has that $K_0(\mathbb{M}_n) = \bz$, and $K_1(\mathbb{M}_n) =0$, and so we obtain 
\[
\xymatrix{
0 \ar[r] & \bz^3 \ar[r] & K_0(C(\Omega)\rtimes \bz_{10}) \ar[r] & \bz^8 \ar[r]^\partial & \bz \ar[r] & K_1(C(\Omega)\rtimes \bz_{10} )\ar[r] & 0
}
\]
Since the image of $\partial$ must have torsion, $\partial$ must be zero and so
\[
K_0(C(\Omega)\rtimes \bz_{10}) \cong \bz^{11}
\]
\[
K_1(C(\Omega)\rtimes \bz_{10}) \cong \bz
\]
\end{ex}

\section{Table of K-theory for Examples}
We give a table for the K-theory of the crossed products for some well-known tilings complete with references of where they appear in the literature. Most also appear in \cite{GS86}. In the following, we shorten $(C(\om)\rtimes \re^2)\rtimes G$ to $A$.

\noindent\begin{tabular}{|l||c|c|c||c|}
\hline
{\bf Tiling} & $G$& $K_0(A)$ & $K_1(A)$& Reference\\
\hline\hline
T\"{u}bingen Triangle & $\bz_{10}$&$\bz^{5}$ & $0$& \cite{BKSZ90}\\
\hline
Penrose & $\bz_{10}$&$\bz^{11}$ & $\bz$& \cite{ORS}, \cite{BDHS}, Example 11\\
\hline
Octagonal & $\bz_8$&$\bz^{11}$ & $\bz$ & \cite{StThesis}, Example 6.4.5\\
\hline
Half-hex & $\bz_6$ &   $\bz\left[\frac14\right]\oplus \bz^4$& 0& \cite{Rand06}, Example 7.2\\
\hline
Chair & $\bz_4$ & $\bz\left[\frac14\right]\oplus \bz^4$& 0& \cite{BDHS}, Example 10\\
\hline
Ammann A2 & $\bz_2$ & $\bz^5 \oplus \bz_2^2$& $\bz$&\cite{AP98}, Example 10.3 \\
\hline
Pinwheel & $\mathbb{T}$ & $\bz\left[\frac{1}{25}\right]\oplus \bz\left[\frac13\right]^2\oplus\bz^{12} \oplus \bz_2$ & $\bz$&\cite{BDHS}, Section 7, \cite{FWW}\\
\hline
\end{tabular}

\vspace{0.5cm}
\begin{rmk}
The C*-algebra most studied in the literature is not exactly $C(\om)\rtimes \re^2$, rather that of the groupoid derived by reducing the $\re^2$ action to a canonical transversal $\Xi\subset \om$. This algebra is typically denoted $A_{\mathfrak{T}}$, and has been studied by many authors, both in the translationally finite case where it is strongly Morita equivalent to $C(\om)\rtimes \re^2$ (\cite{AP98}, \cite{KelNCG}, \cite{KP00}, \cite{Pu00}) and in the infinite rotation case (\cite{Wh10}). In the case of translationally finite tilings arising from primitive substitutions, we studied the crossed products $A_\mathfrak{T}\rtimes G$ in \cite{St12} (based on the author's PhD thesis \cite{StThesis}). In this case, it is straightforward to show that $A_\mathfrak{T}\rtimes G$ is strongly Morita equivalent to $C(\om)\rtimes (\re^2 \rtimes G)$, and so has the same K-theory (see \cite{StThesis}, Theorem 5.5.2 for the details).

In \cite{St12}, we used techniques of Putnam and Phillips to show that $A_\mathfrak{T}\rtimes G$ is simple, has real rank zero, stable rank one, has a unique trace, and that the order on $K_0$ is determined by this trace. This is (incomplete) progress towards showing $A_\mathfrak{T}\rtimes G$ is classifiable by its K-theory. If one could show that $A_\mathfrak{T}\rtimes G$ is classifiable in this case, then one could conclude that, for instance, $A_\mathfrak{T}\rtimes G$ is an {\em approximately finite} or {\em AF} C*-algebra when $\mathfrak{T}$ is the half-hex tiling. Interestingly, $A_\mathfrak{T}\rtimes G$ has a canonical AF-subalgebra, denoted in \cite{St12} as $A_\omega\rtimes G$, and in the case of the half-hex, $K_0(A_\omega\rtimes G) = \bz\left[\frac14\right]$. Hence even if $A_\mathfrak{T}\rtimes G$ is proven to be classifiable and hence AF, it is not isomorphic to the obvious candidate.
\end{rmk}
{\bf Acknowledgements.} The author wishes to thank Thierry Giordano, Ian Putnam, Michael Whittaker, Daniel Gon\c {c}alves, David Handelman, Heath Emerson and Siegfried Echterhoff for many helpful conversations and suggestions regarding this work. In particular, great gratitude is due Thierry Giordano who supervised work on the author's PhD thesis from which this paper is derived.
\bibliography{C:/Users/Charles/Dropbox/Research/bibtex}{}
\bibliographystyle{plain}
\end{document}